\definecolor{lightgray}{rgb}{0.8, 0.8, 0.8}
\definecolor{darkgray}{rgb}{0.6, 0.6, 0.6}
\newcounter{todocounter}
\theoremstyle{plain}
\newtheorem{theorem}{Theorem}
\newtheorem{proposition}[theorem]{Proposition}
\newfont{\footsc}{cmcsc10 at 8truept}
\newfont{\footbf}{cmbx10 at 8truept}
\newfont{\footrm}{cmr10 at 10truept}
\renewenvironment{abstract}{
	\begin{list}{}%
	{\setlength{\rightmargin}{1in}%
	\setlength{\leftmargin}{1in}}%
	\item[]\ignorespaces\begin{small}}%
	{\end{small}\unskip\end{list}%
}
\newcommand{\Av}{\operatorname{Av}}
\newcommand{\Grid}{\operatorname{Grid}}
\newcommand{\C}{\mathcal{C}}
\newcommand{\M}{\mathcal{M}}
\newcommand{\gr}{\mathrm{gr}}
\newcommand{\st}{\::\:}
\newcommand{\gridded}{\sharp}
\newcommand{\supp}{\mathrm{supp}}
\renewcommand{\vec}[1]{\mathbf{#1}}
\newcommand{\fnmatrix}[2]{\text{\begin{footnotesize}$\left(\begin{array}{#1}#2\end{array}\right)$\end{footnotesize}}}
\newcommand\dummyvar{\mathpalette\bigcdot@{.75}}
\newcommand\bigcdot@[2]{\mathbin{\vcenter{\hbox{\scalebox{#2}{$\m@th#1\bullet$}}}}}
\newcommand\mybullet{\raisebox{-5pt}{\normalsize \ensuremath{\bullet}}}
\newcommand\mycirc{\raisebox{-5pt}{\normalsize \ensuremath{\circ}}}
\def\absdot{\@ifnextchar[{\@absdotlabel}{\@absdotnolabel}}
	\def\@absdotlabel[#1]#2{%
		\node at #2 {\normalsize \mybullet};
		\node at #2 [below=2pt] {\ensuremath{#1}};
	}
	\def\@absdotnolabel#1{%
		\node at #1 {\normalsize \mybullet};
	}
\def\absdothollow{\@ifnextchar[{\@absdothollowlabel}{\@absdothollownolabel}}
	\def\@absdothollowlabel[#1]#2{%
		\node at #2 {\normalsize \textcolor{white}{\mybullet}};
		\node at #2 {\normalsize \mycirc};
		\node at #2 [below=2pt] {\ensuremath{#1}};
	}
	\def\@absdothollownolabel#1{%
		\node at #1 {\normalsize \textcolor{white}{\mybullet}};
		\node at #1 {\normalsize \mycirc};
	}
\newcommand{\plotperm}[1]{
	\foreach \j [count=\i] in {#1} {
		\absdot{(\i,\j)};
	};
}
\newcommand{\plotpermbox}[4]{
	\draw [darkgray, thick, line cap=round]
		({#1-0.5}, {#2-0.5}) rectangle ({#3+0.5}, {#4+0.5});
}
\newcommand{\plotpermgraph}[1]{
	\foreach \j [count=\i] in {#1} {
		\foreach \b [count=\a] in {#1} {
			\ifthenelse{\a<\i \AND \b>\j}{\draw (\a,\b)--(\i,\j);}{}
		};
	};
	\plotperm{#1};
}
\newcommand{\plotpermdyckpath}[1]{
	\draw[ultra thick, line cap=round] (0.5,0.5)
	\foreach \step in {#1} {
		\ifnum\step=1
			-- ++(0,1)
		\else
			-- ++(1,0)
		\fi
	};
}
\newcommand{\plotdyckpath}[1]{
	\draw[ultra thick, line cap=round] (0.5,0)
	\foreach \step in {#1} {
		\ifnum\step=1
			-- ++(1,1)
		\else
			-- ++(1,-1)
		\fi
	};
}
\newcommand{\arcskinnyplain}[2]{
	\draw[thick] (#1,0) arc (180:0:{(#2-#1)/2});
}
\newcommand{\matchsmall}[1]{
	\begin{tikzpicture}[scale=.1, anchor=base]
		\def\h{0};
		\def\maxh{0};
		\foreach \i/\j in {#1} {
			\pgfmathparse{\j-\i};
			\let\h\pgfmathresult;
			\pgfmathifthenelse{\h>\maxh}{\h}{\maxh};
			\global\let\maxh\pgfmathresult;
		};
		\pgftransformyscale{{4.5/\maxh}};
		\foreach \i/\j in {#1} {
			\arcskinnyplain{\i}{\j};
		};
	\end{tikzpicture}
}
\newcommand{\matchpermsmall}[1]{
	\begin{tikzpicture}[scale=.1, anchor=base]
		\foreach \j [count=\n] in {#1} {};
		\def\h{0};
		\def\maxh{0};
		\foreach \j [count=\i] in {#1} {
			\pgfmathparse{2*\n+1-\j-\i};
			\let\h\pgfmathresult;
			\pgfmathifthenelse{\h>\maxh}{\h}{\maxh};
			\global\let\maxh\pgfmathresult;
		};
		\pgftransformyscale{{4.5/\maxh}};
		\foreach \j [count=\i] in {#1} {
			\arcskinnyplain{\i}{{2*\n+1-\j}};
		};
	\end{tikzpicture}
}
\newcommand{\addomain}{\vec{A}}
\title{\sc An Elementary Proof of Bevan's Theorem On the Growth of Grid Classes of Permutations}
\author{\centering
\begin{tabular}{ccc}
Michael Albert
&\rule{0pt}{0pt}&
Vincent Vatter%
\footnote{Vatter's research was partially supported by the National Science Foundation under Grant Number DMS-1301692 and the National Security Agency under Grant Number H98230-16-1-0324. The United States Government is authorized to reproduce and distribute reprints not-withstanding any copyright notation herein.}\\[-0.25ex]
\small Department of Computer Science
&&
\small Department of Mathematics\\[-0.5ex]
\small University of Otago
&&
\small University of Florida\\[-0.5ex]
\small Dunedin, New Zealand
&&
\small Gainesville, Florida USA\\[-1.5ex]
\end{tabular}
}
\titleformat{\section}{\large\sc}{\thesection.}{1em}{}
\date{}
\begin{document}
\maketitle

\pagestyle{main}

\begin{abstract}
{\sc Abstract.}
Bevan established that the growth rate of a monotone grid class of permutations is equal to the square of the spectral radius of a related bipartite graph. We give an elementary and self-contained proof of a generalization of this result using only Stirling's Formula, the method of Lagrange multipliers, and the singular value decomposition of matrices.
\end{abstract}

\section{Introduction}

Herein we provide an elementary derivation of the asymptotic enumeration of certain permutation classes called grid classes. For a broad overview of permutation classes, we refer the reader to the second author's survey~\cite{vatter:permutation-cla:}, and give only the necessary definitions here.

Two finite sequences of distinct real numbers are \emph{order isomorphic} if they have the same relative comparisons. In other words, the sequences $a(1),\dots,a(k)$ and $b(1),\dots,b(k)$ are order isomorphic if,  for all $i$ and $j$, $a(i)<a(j)$ if and only if $b(i)<b(j)$. A permutation \emph{of length $n$} is a bijective map from the set $\{1,2,\dots,n\}$ to itself. The permutation $\pi$ of length $n$, thought of in one-line notation as the sequence $\pi(1)\cdots\pi(n)$, is said to \emph{contain} the permutation $\sigma$ if $\pi$ has a subsequence which is order isomorphic to $\sigma$. Otherwise we say that $\pi$ \emph{avoids} $\sigma$. For example, the permutation $41523$ contains $231$ because of its subsequence $452$ but avoids $321$ because it does not contain three entries in decreasing order.

This containment relation is a partial order on the set of all finite permutations, and a \emph{permutation class} is a downset in this order. That is, if $\C$ is a permutation class, $\pi\in\C$, and $\sigma$ is contained in $\pi$, then $\sigma\in\C$. Permutation classes are frequently described via the minimal permutations they do \emph{not} contain (or, in other words, \emph{avoid}), for which we use the notation
\[
	\Av(B)=\{\pi\st\pi\mbox{ avoids all $\beta\in B$}\}.
\]
We denote by $\C_n$ the set of permutations of length $n$ in the class $\C$ and define the \emph{proper growth rate} of $\C$ by
\[
	\gr(\C)=\lim_{n\rightarrow\infty} \sqrt[n]{|\C_n|}
\]
if this limit exists (it is conjectured that this limit always exists; in cases where it is not known to exist we often instead consider the \emph{upper growth rate} of $\C$, obtained by replacing the limit above by a limit superior). More generally, for any collection of discrete structures we define the growth rate (or upper growth rate) to be the limit (or limit superior) of the $n^{\mbox{\scriptsize th}}$ root of the number of structures of size $n$ in the collection as $n$ tends to infinity. From the resolution of the Stanley--Wilf Conjecture by Marcus and Tardos~\cite{marcus:excluded-permut:} it follows that---except for the degenerate case of the class of all permutations---all upper growth rates of permutation classes are finite.

We are interested in a particular construction of permutation classes, called \emph{(generalized) grid classes}, which were first introduced in the second author's work on the classification of the set of growth rates of permutation classes~\cite{vatter:small-permutati:}. 

\begin{figure}
\begin{footnotesize}
\begin{center}
	\begin{tikzpicture}[scale=0.2, baseline=(current bounding box.center)]
		\plotpermbox{1}{1}{7}{10};
		\plotpermbox{1}{11}{7}{16};
		\plotpermbox{8}{1}{16}{10};
		\plotpermbox{8}{11}{16}{16};
		\plotperm{14,1,5,7,12,10,11,9,13,15,8,6,4,16,3,2};
	\end{tikzpicture}
\quad\quad
	\begin{tikzpicture}[scale=0.2, baseline=(current bounding box.center)]
		\plotpermbox{1}{1}{13}{7};
		\plotpermbox{1}{8}{13}{16};
		\plotpermbox{14}{1}{16}{7};
		\plotpermbox{14}{8}{16}{16};
		\plotperm{10,12,8,6,5,13,4,15,9,2,10,16,14,7,3,1};
	\end{tikzpicture}
\end{center}
\end{footnotesize}
\caption{An $\protect\fnmatrix{cc}{\Av(12)&\Av(21)\\\Av(21)&\Av(12)}$-gridding (left) and an $\protect\fnmatrix{cc}{\Av(321)&\emptyset\\\Av(12)&\Av(12)}$-gridding (right) of two different permutations.}
\label{fig-gridding-examples}
\end{figure}

Suppose that $\M$ is a $t\times u$ matrix whose entries are permutation classes.  An {\it $\M$-gridding\/} of the permutation $\pi$ of length $n$ is a pair of sequences $1=c_1\le\cdots\le c_{t+1}=n+1$ (the column divisions) and $1=r_1\le\cdots\le r_{u+1}=n+1$ (the row divisions) such that for all $1\le k\le t$ and $1\le\ell\le u$, the entries of $\pi$ with indices in $[c_k,c_{k+1})$ and values in $[r_{\ell}, r_{\ell+1})$ are order isomorphic to a permutation in the class $\M_{k,\ell}$. The \emph{grid class of $\M$}, denoted $\Grid(\M)$, is defined as the class of all permutations which possess an $\M$-gridding.

The \emph{skew-merged} permutations were the first grid class of permutations to be studied. Stankova~\cite{stankova:forbidden-subse:} originally defined these to be the permutations whose entries can be partitioned into an increasing subsequence and a decreasing subsequence. This condition is equivalent to the existence of a horizontal and a vertical line which slice the plot of $\pi$ (the set $\{(i,\pi(i))\}$ of points in the plane) into four quadrants such that the northwestern and southeastern cells consist of decreasing subsequences (i.e., subsequences avoiding $12$) and the southwestern and northeastern cells consist of increasing subsequences (which therefore avoid $21$). This shows that the class of skew-merged permutations can be expressed as the grid class
\[
	\Grid\fnmatrix{cc}{\Av(12)&\Av(21)\\\Av(21)&\Av(12)}.
\]
An example of such a gridding of a skew-merged permutation is shown on the left-hand side of Figure~\ref{fig-gridding-examples}. The right-hand side of that figure shows a gridding of a member of a different grid class. While it makes no difference to our results, we use Cartesian coordinates to index matrices, so the entry $\M_{k,\ell}$ denotes the cell in the $k^{\mbox{\scriptsize th}}$ column from the left and the $\ell^{\mbox{\scriptsize th}}$ row from the bottom of $\M$.

The main theorem of this paper, below, shows that the proper growth rate of $\Grid(\M)$ is an eigenvalue of a closely related matrix.

\begin{theorem}
\label{thm-bevan-grid-gr}
Let $\M$ be a $t\times u$ matrix of permutation classes, each with a proper growth rate, and let $\Gamma$ be the $t\times u$ matrix with $\Gamma_{k,\ell}=\sqrt{\gr(\M_{k,\ell})}$. The proper growth rate of $\Grid(\M)$ exists and is equal to the greatest eigenvalue of $\Gamma^T\Gamma$ \textup{(}or equivalently, of $\Gamma\Gamma^T$\textup{)}.
\end{theorem}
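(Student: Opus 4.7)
The plan is to compare $|\Grid(\M)_n|$ with the number of $\M$-gridded permutations of length $n$. The latter differs from the former by at most the number of possible divisions, which is $\binom{n+1}{t-1}\binom{n+1}{u-1}$, a polynomial factor that does not affect the growth rate. A gridded permutation is determined by choosing a matrix $(n_{k,\ell})$ of nonnegative integers summing to $n$ (giving the number of points in each cell), then, within each column $k$, choosing which of the $\alpha_k = \sum_\ell n_{k,\ell}$ indices fall into each row (a multinomial of $\binom{\alpha_k}{n_{k,1},\ldots,n_{k,u}}$), symmetrically for rows with $\beta_\ell = \sum_k n_{k,\ell}$, and finally choosing, in each cell $(k,\ell)$, a permutation in $\M_{k,\ell}$ of length $n_{k,\ell}$. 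This yields an exact count as a sum over $(n_{k,\ell})$ of products of two multinomials and $\prod_{k,\ell} |(\M_{k,\ell})_{n_{k,\ell}}|$.

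Next, set $n_{k,\ell} = nx_{k,\ell}$ with $\sum x_{k,\ell} = 1$ and apply Stirling's Formula. The $\log n$ terms coming from the two multinomials and the square in the denominator cancel, and the exponent of a generic term is asymptotically $n\,F(x)$, where
\[
F(x) = \sum_k a_k\log a_k + \sum_\ell b_\ell\log b_\ell - 2\sum_{k,\ell} x_{k,\ell}\log x_{k,\ell} + \sum_{k,\ell} x_{k,\ell}\log \gr(\M_{k,\ell}),
\]
with $a_k = \sum_\ell x_{k,\ell}$ and $b_\ell = \sum_k x_{k,\ell}$. For the upper bound, use $|(\M_{k,\ell})_m| \le C(\varepsilon)(\gr(\M_{k,\ell})+\varepsilon)^m$; for the lower bound, use that, because $\gr(\M_{k,\ell})$ exists, the reverse inequality holds for all sufficiently large $m$, and concentrate the sum near the optimal $(x_{k,\ell})$. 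The number of terms is polynomial in $n$, so the growth rate of $\Grid(\M)$ equals $\exp(\max F)$.

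I would then use the method of Lagrange multipliers to maximize $F$ subject to $\sum x_{k,\ell} = 1$. Setting partial derivatives equal gives $\log(a_k b_\ell \gr(\M_{k,\ell})/x_{k,\ell}^2) = \lambda$, i.e., $x_{k,\ell} = c\sqrt{a_k b_\ell}\,\Gamma_{k,\ell}$ for some constant $c$. Writing $p_k = \sqrt{a_k}$ and $q_\ell = \sqrt{b_\ell}$, the row and column sum constraints become $p = c\,\Gamma q$ and $q = c\,\Gamma^T p$, so $p$ is an eigenvector of $\Gamma\Gamma^T$ with eigenvalue $c^{-2}$. Substituting back and cancelling terms reveals (pleasantly) that $F = -2\log c = \log(c^{-2})$ at the critical point, so the growth rate is precisely this eigenvalue.

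The main obstacles I expect are (i) justifying that the supremum of $F$ is attained in the interior at a point with $p,q > 0$, so that Lagrange multipliers apply and the critical-point value coincides with the supremum, and (ii) identifying which eigenvalue of $\Gamma\Gamma^T$ arises. Both are resolved by the singular value decomposition: the largest eigenvalue of $\Gamma\Gamma^T$ equals the square of the top singular value of $\Gamma$, and the nonnegativity of $\Gamma$ ensures nonnegative singular vectors for this top eigenvalue (by a Perron–Frobenius–type argument applied to $\Gamma\Gamma^T$). Degenerate cases in which some $\Gamma_{k,\ell} = 0$ (empty or finite cell classes), or in which $\Gamma\Gamma^T$ is reducible, require an approximation argument perturbing the entries of $\Gamma$ slightly and then passing to a limit, but otherwise the analysis is exactly as above.
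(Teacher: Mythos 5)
Your proposal follows essentially the same route as the paper: reduce to gridded permutations, count them by distributing points to cells via multinomials, pass via Stirling to a continuous maximization of $F=\log f$ on a simplex, and attack that maximization with Lagrange multipliers, the singular value decomposition, and Perron--Frobenius. The gap is in the claim that obstacles (i) and (ii) are both ``resolved by the singular value decomposition.'' Perron--Frobenius gives \emph{nonnegative} (not necessarily strictly positive) singular vectors for the top singular value $s_{\max}$; plugging $x_{k,\ell}=c\sqrt{a_k b_\ell}\,\Gamma_{k,\ell}$ built from those vectors into $F$ yields the value $\log s_{\max}^2$, which proves only the \emph{lower} bound $\max F\ge \log s_{\max}^2$. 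It does not show the maximizer has $p,q>0$ or lies in the interior, and indeed it need not: if the maximizer $\hat{A}$ has a single nonzero entry it sits at a vertex of the simplex. The matching upper bound must run in the opposite direction: given an arbitrary maximizer $\hat{A}$, set $S=\supp(\hat{A})$; when $|S|\ge 2$, $\hat{A}$ lies in the relative interior of the face of the simplex supported on $S$, so the Lagrange conditions hold there and force $(p,q)$ to be a singular pair of $\Gamma$ for \emph{some} singular value $s$, giving $F(\hat{A})=\log s^2\le \log s_{\max}^2$; the case $|S|=1$ gives $F(\hat{A})=\log\Gamma_{k,\ell}^2\le \log s_{\max}^2$ directly. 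Finally, perturbing the entries of $\Gamma$ to handle zeros is unnecessary (and would require its own continuity argument relating the maxima over the perturbed and unperturbed constraint sets); it is cleaner simply to restrict the admissible domain to $[0,1]^{\supp(\Gamma)}$ from the outset, as the paper does, so that forced zeros never enter the optimization.
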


Before beginning the proof, a few comments are in order. First, $\Gamma^T\Gamma$ is obviously a symmetric positive semidefinite matix, so the eigenvalue occurring in the statement of Theorem~\ref{thm-bevan-grid-gr} is guaranteed to be nonnegative. Second, if the cells of $\M$ do not have proper growth rates, one can take $\Gamma_{k,\ell}$ equal to the square root of the upper growth rate of $\M_{k,\ell}$ and, mutatis mutandis, the argument used to prove Theorem~\ref{thm-bevan-grid-gr} gives an upper bound on the upper growth rate of $\Grid(\M)$. However, one has no guarantee that the quantity so obtained is in fact the upper growth rate of $\Grid(\M)$.

More significantly, Theorem~\ref{thm-bevan-grid-gr} is a generalization of the work of Bevan. In \cite{bevan:growth-rates-of:}, he showed how to compute the proper growth rates of \emph{monotone} grid classes, which are grid classes in which the cells may contain only the class of increasing permutations (those avoiding $21$), the class of decreasing permutations (those avoiding $12$), or the empty class. Thus in applying Theorem~\ref{thm-bevan-grid-gr} to such classes, $\Gamma$ is a $0/1$ matrix. Via a delicate argument, Bevan showed that in this case, the growth rate of $\Grid(\M)$ is equal to the square of the spectral radius of a certain bipartite graph associated to $\Gamma$. Translated to a purely linear algebraic context, this result says that the growth rate of $\Grid(\M)$ is the square of the greatest eigenvalue of the matrix
\[
	\fnmatrix{cc}{0&\Gamma\\\Gamma^T&0},
\]
and it is a routine exercise to show that this formula is equivalent to Theorem~\ref{thm-bevan-grid-gr} in these cases.

Finally, because of this connection to spectral radii of graphs, the extensive literature on algebraic graph theory can be applied directly to the characterization of growth rates of monotone grid classes. We refer the reader to Bevan~\cite[Section 4]{bevan:growth-rates-of:} for these implications. Further applications of Theorem~\ref{thm-bevan-grid-gr}, to growth rates of merges of permutation classes, are described in Albert, Pantone, and Vatter~\cite{albert:on-the-growth-o:}. We note that special cases of some aspects of our approach are stated without proof in Bevan's thesis~\cite[Chapter 6]{bevan:on-the-growth-o:}.

\section{Initial Considerations}

A permutation $\pi\in\Grid(\M)$ together with a fixed $\M$-gridding of it is called an \emph{$\M$-gridded permutation}, and we denote by $\Grid^\gridded(\M)$ the collection of all $\M$-gridded permutations. Every permutation $\pi \in \Grid(\M)$ has at least one gridding that witnesses its membership in the grid class, so trivially $|\Grid_n(\M)|\le |\Grid^\gridded_n(\M)|$. Conversely, there are at most $n+1$ positions in which a vertical (resp., horizontal) line can be placed in a permutation of length $n$, and thus we have
\[
	\lvert \Grid_n(\M) \rvert
	\le
	\lvert \Grid^{\gridded}_n(\M) \rvert
	\le
	(n+1)^{t+u} \lvert \Grid_n(\M) \rvert.
\]
In particular, because $(n+1)^{(t+u)/n} \to 1$ as $n \to \infty$, the upper growth rates of $\Grid(\M)$ and $\Grid^{\gridded}(\M)$ are the same. For this reason, we work only in the gridded context for the rest of the proof.

Because we think of $\M$ as providing a recipe for constructing the members of $\Grid(\M)$, it is natural to refer to the individual positions in the matrix $\M$ as its \emph{cells}. More precisely (though we will not use this precise formulation in what follows) we can take the cells of $\M$ to consist simply of the pairs $(k,\ell)$ with $1 \leq k \leq t$ and $1 \leq \ell \leq u$, and we say that the cell $(k,\ell)$ contains the class $\M_{k,\ell}$. In fact we  implicitly identify the cell with its corresponding class.

By an argument similar to the preceding one, we see that cells of $\M$ which contain finite classes do not affect the growth rate of $\Grid(\M)$. Thus, we may assume that every cell of $\M$ is either empty or contains an infinite permutation class. Since any infinite class contains at least one permutation of every length, the growth rate of such classes is at least $1$.

\section{Translation to a Continuous Problem}


For the rest of the note we assume $\M$ is a fixed $t\times u$ matrix consisting of empty and infinite permutation classes, each with a proper growth rate. Given a real matrix $A$, its \emph{support} is the set of indices of cells corresponding to nonzero entries,
\[
	\supp(A)=\{(k,\ell)\st A_{k,\ell}\neq 0\}.
\]
Thus under our hypothesis that the entries of $\M$ are all empty or infinite classes, we see that $\M_{k,\ell}\neq\emptyset$ if and only if $(k,\ell)\in\supp(\Gamma)$. We say that a $t\times u$ real matrix $A$ is \emph{admissible} (implicitly, for $\M$) if
\[
	\supp(A)\subseteq\supp(\Gamma),
\]
or equivalently, if $A_{k,\ell}=0$ whenever $\M_{k,\ell}=\emptyset$. We further refer to the sum of the entries of a real matrix as its \emph{weight}.

If $A$ is an admissible nonnegative integral matrix of weight $n$, we denote by $\Grid^{\gridded}_A(\M)$ the subset of $\Grid^{\gridded}_n(\M)$ consisting of those gridded permutations such that the number of entries belonging to cell $(k,\ell)$ is $A_{k,\ell}$. The sets $\Grid^{\gridded}_A(\M)$ allow us to express the collection of $\M$-gridded permutations as a disjoint union
\[
	\Grid^\gridded_n(\M)=\biguplus \Grid^\gridded_A(\M),
\]
taken over all admissible nonnegative integral matrices of weight $n$.

For the rest of the note we adopt several notational conventions. First, we use the combinatorial interpretation $0^0=1$ when this quantity arises. Second, recalling that we use Cartesian indexing, for any matrix $A$ with real entries we denote by $\sum A_{k,\dummyvar}$ (resp., $\sum A_{\dummyvar,\ell}$) the sum of the entries in its $k^{\mbox{\scriptsize th}}$ column (resp., $\ell^{\mbox{\scriptsize th}}$ row). Similarly, we denote by $\prod A_{k,\dummyvar}$ (resp., $\prod A_{\dummyvar,\ell}$) the product of the nonzero entries in the $k^{\mbox{\scriptsize th}}$ column (resp., $\ell^{\mbox{\scriptsize th}}$ row) of $A$.

\begin{proposition}
\label{prop-count-gridded-A}
For every admissible nonnegative integral matrix $A$ we have
\[
	\lvert\Grid^\gridded_A(\M)\rvert
	=
	\prod_{k=1}^t \binom{\sum A_{k,\dummyvar}}{A_{k,1}, \dotsc, A_{k,u}} 
	\prod_{\ell=1}^u \binom{\sum A_{\dummyvar,\ell}}{A_{1,\ell}, \dotsc, A_{t,\ell}}
	\prod_{k,\ell} \lvert (\M_{k,\ell})_{A_{k,\ell}} \rvert.
\]
\end{proposition}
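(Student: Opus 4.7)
The plan is to establish a bijection between $\Grid^\gridded_A(\M)$ and the Cartesian product whose three factors are enumerated by the three products on the right-hand side. Since $A$ fixes the number of points in each column strip and each row strip, the column dividers $c_k = 1+\sum_{k'<k}\sum A_{k',\dummyvar}$ and their row analogues are uniquely determined for every $\pi \in \Grid^\gridded_A(\M)$, so we may treat them as fixed throughout.

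From a given $\pi \in \Grid^\gridded_A(\M)$ I would extract three pieces of data. First, for each column $k$, read off the length-$\sum A_{k,\dummyvar}$ word over the alphabet $\{1,\dots,u\}$ whose $i$-th letter is the index $\ell$ of the row containing $\pi(c_k+i-1)$. By definition the letter $\ell$ appears exactly $A_{k,\ell}$ times, so there are exactly $\binom{\sum A_{k,\dummyvar}}{A_{k,1},\dots,A_{k,u}}$ such words per column. Second, for each row $\ell$, the analogous word indexed by value rather than position is counted by $\binom{\sum A_{\dummyvar,\ell}}{A_{1,\ell},\dots,A_{t,\ell}}$. Third, for each cell $(k,\ell)$, record the standardization of the subpermutation of $\pi$ lying in that cell; this is an arbitrary element of $(\M_{k,\ell})_{A_{k,\ell}}$, with the convention $|(\M_{k,\ell})_0|=1$ covering cases in which $A_{k,\ell}=0$ (including, by admissibility, every cell for which $\M_{k,\ell}=\emptyset$).

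It then remains to check that this assignment is a bijection. Injectivity is immediate: the column word identifies, for each cell $(k,\ell)$, the set of $A_{k,\ell}$ positions assigned to it; the row word identifies the set of $A_{k,\ell}$ values it contains; and the cell pattern then specifies the matching between positions and values, reconstructing $\pi$ completely. Surjectivity is the converse construction applied to any triple of data. The one point that requires attention, and which I expect to be the main (very mild) obstacle, is confirming that the column words and the row words can be chosen independently of one another: a column word specifies only which cell each position belongs to, a row word only which cell each value belongs to, and neither imposes any condition on the other. Once this is noted, the three factors genuinely multiply and the formula follows.
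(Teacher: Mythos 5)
Your proof is correct and takes essentially the same approach as the paper: choose, column by column, how the entries are distributed among the rows; choose, row by row, how the values are distributed among the columns; then fill in each cell with an arbitrary pattern from $\M_{k,\ell}$. You simply spell out the bijection more formally and flag the independence point that the paper leaves implicit.
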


\begin{proof}
The multinomial coefficients in the first product count the number of ways in which the entries of the gridded permutation can be distributed horizontally within each column. Those in the second product enumerate the analogous possibilities for the rows. This determines the horizontal and vertical positions which are occupied in each cell and it remains only to choose the permutations formed by the contents of each of these cells, which may be order isomorphic to any permutation of length $A_{k,\ell}$ in the class $\M_{k,\ell}$.
\end{proof}

For each positive integer $n$, let $A^*_n$ denote an admissible nonnegative integral matrix of weight $n$ which maximizes $|\Grid^\gridded_A(\M)|$ over all admissible nonnegative integral matrix of weight $n$. Since $\Grid^\gridded_n(\M)$ is the disjoint union of $\Grid^\gridded_A(\M)$ over all such matrices, and because there are at most $(n+1)^{tu}$ such matrices of weight $n$, we have the bound
\[
	\lvert\Grid^\gridded_{A^*_n}(\M)\rvert
	\le
	\lvert\Grid^\gridded_n(\M)\rvert
	\le
	(n+1)^{tu}\,\lvert\Grid^\gridded_{A^*_n}(\M)\rvert.
\]
The polynomial factor $(n+1)^{tu}$ has no effect on upper growth rates, and thus we deduce the following.

\begin{proposition}
There is a sequence $\{A^*_n\}$ of admissible nonnegative integral matrices, each of weight $n$, such that the upper growth rate of $\Grid(\M)$ is equal to
\[
	\limsup_{n\rightarrow\infty}\ \lvert\Grid^\gridded_{A^*_n}(\M)\rvert^{1/n}.
\]
\end{proposition}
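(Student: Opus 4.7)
The plan is to use the sandwich inequality already displayed just above the proposition together with a finite-set maximization. For each positive integer $n$, the set of admissible nonnegative integral matrices of weight $n$ is finite: each of the $tu$ entries lies in $\{0,1,\dots,n\}$, so there are at most $(n+1)^{tu}$ candidates. Since $\lvert\Grid^\gridded_A(\M)\rvert$ is a well-defined nonnegative integer for each admissible $A$, I simply define $A^*_n$ to be any maximizer of $\lvert\Grid^\gridded_A(\M)\rvert$ over this finite collection (breaking ties arbitrarily).

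With this choice, the disjoint decomposition $\Grid^\gridded_n(\M)=\biguplus \Grid^\gridded_A(\M)$ over admissible $A$ of weight $n$, together with maximality of $A^*_n$, immediately gives
\[
	\lvert\Grid^\gridded_{A^*_n}(\M)\rvert \le \lvert\Grid^\gridded_n(\M)\rvert \le (n+1)^{tu}\,\lvert\Grid^\gridded_{A^*_n}(\M)\rvert,
\]
which is exactly the inequality stated just before the proposition. Taking $n$-th roots and $\limsup$ as $n\to\infty$, the polynomial factor satisfies $(n+1)^{tu/n}\to 1$, and the sandwich yields
\[
	\limsup_{n\to\infty}\lvert\Grid^\gridded_{A^*_n}(\M)\rvert^{1/n}=\limsup_{n\to\infty}\lvert\Grid^\gridded_n(\M)\rvert^{1/n}.
\]

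To finish, I would invoke the observation made at the beginning of \textsc{Initial Considerations} that the upper growth rates of $\Grid(\M)$ and $\Grid^\gridded(\M)$ agree, which identifies the right-hand side above with the upper growth rate of $\Grid(\M)$ and completes the proof. There is no real obstacle: the proposition is a formal pigeonhole reduction, observing that since $\Grid^\gridded_n(\M)$ is partitioned into at most polynomially many pieces, a largest piece must carry the full exponential growth rate.
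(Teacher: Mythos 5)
Your proposal is correct and matches the paper's own argument exactly: choose $A^*_n$ as a maximizer over the finitely many (at most $(n+1)^{tu}$) admissible integral matrices of weight $n$, apply the sandwich inequality from the disjoint decomposition, observe the polynomial factor is negligible after taking $n$-th roots, and invoke the earlier equality of upper growth rates for $\Grid(\M)$ and $\Grid^\gridded(\M)$. Nothing to add.
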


Given an admissible nonnegative integral matrix, we scale it by dividing each of its entries by its weight. Each scaled matrix obtained by this process lies in the \emph{admissible domain} $\addomain\subseteq [0,1]^{t\times u}$ consisting of all admissible matrices $\hat{A}$ of unit weight. On the other hand, given an arbitrary $\hat{A}\in\addomain$, the matrix $\lfloor n\hat{A}\rfloor$ defined by $\lfloor n\hat{A}\rfloor_{k,\ell}=\lfloor n\hat{A}_{k,\ell}\rfloor$ is an admissible nonnegative integral matrix of weight at most $n$.

Next we define a function $f:\addomain\to\mathbb{R}$ which is related to an approximation of the formula for $|\Grid^\gridded_A(\M)|$ given by Proposition~\ref{prop-count-gridded-A}. To motivate its definition, consider that for a matrix $A$ whose nonzero entries are large, Stirling's formula shows that
\[
	\binom{\sum A_{k,\dummyvar}}{A_{k,1}, \dotsc, A_{k,u}}
	\sim
	\sqrt{\frac{\sum A_{k,\dummyvar}}{(2\pi)^{u-1}\prod A_{k,\dummyvar}}}
	\cdot
	\frac{\left(\sum A_{k,\dummyvar}\right)^{(\sum A_{k,\dummyvar})}}{A_{k,1}^{A_{k,1}}\cdots A_{k,u}^{A_{k,u}}}.
\]
As we will eventually take $n^{\mbox{\scriptsize th}}$ roots, the quantity in the square root above will not contribute to the growth rate. By ignoring this soon-to-be-negligible quantity, we are left with
\[
	\frac{\left(\sum A_{k,\dummyvar}\right)^{(\sum A_{k,\dummyvar})}}{A_{k,1}^{A_{k,1}}\cdots A_{k,u}^{A_{k,u}}}.
\]
In forming the product in Proposition~\ref{prop-count-gridded-A}, each row contributes the numerator of one of these fractions, each column contributes a similar numerator, and the term $A_{k,\ell}^{A_{k,\ell}}$ occurs in precisely two of the denominators. Each nonzero cell $(k,\ell)$ also contributes the factor $\left| (\M_{k,\ell})_{A_{k,\ell}} \right|$, which converges after taking $n^{\mbox{\scriptsize th}}$ roots to $\Gamma_{k,\ell}^{2A_{k,\ell}}$ because the growth rate of $\M_{k,\ell}$ is $\Gamma_{k,\ell}^2$. In fact, this approximation holds for all cells, whether or not they are nonzero.

For these reasons, we define the function $f:\addomain\to\mathbb{R}$ by
\begin{eqnarray*}
	f(X)
	&=&
	\prod_{k=1}^t \left(\sum X_{k,\dummyvar}\right)^{(\sum X_{k,\dummyvar})}
	\prod_{\ell=1}^u \left(\sum X_{\dummyvar,\ell}\right)^{(\sum X_{\dummyvar,\ell})}
	\prod_{k,\ell} \Gamma_{k,\ell}^{2X_{k,\ell}}X_{k,\ell}^{-2X_{k,\ell}},\\
	&=&
	\prod_{k,\ell}
		\left(\frac{\Gamma_{k,\ell}^2 \left(\sum X_{k,\dummyvar}\right)\left(\sum X_{\dummyvar,\ell}\right)}{X_{k,\ell}^2}\right)^{X_{k,\ell}}.
\end{eqnarray*}
For future reference, note that each factor in the product above is always greater than or equal to $1$ since $\sum X_{k,\dummyvar}$ and $\sum X_{\dummyvar,\ell}$ are each greater than or equal to $X_{k, \ell}$ and the growth rate of any infinite permutation class is at least $1$.

The admissible domain $\addomain$ is compact, so we can find a subsequence of $\{A^*_n\}$ for which the sequence $\{A^*_n/n\}$ of scaled matrices converges pointwise to some matrix $\hat{A}^\ast\in\addomain$. Then (as we have assumed that the cells of $\M$ all have proper growth rates) an application of Stirling's Formula followed by taking $n^{\mbox{\scriptsize th}}$ roots and limits shows that the upper growth rate of $\Grid(\M)$ is equal to $f(\hat{A}^\ast)$. (This is in fact the only place in the argument where the hypothesis that the classes $\M_{k,\ell}$ have proper growth rates is used.)

Conversely, for an arbitrary $X\in\addomain$, the same application of Stirling's Formula, $n^{\mbox{\scriptsize th}}$ roots, and limits shows that
\[
	f(X)
	=
	\lim_{n\rightarrow\infty}\ \lvert\Grid^\gridded_{\lfloor nX\rfloor}(\M)\rvert^{1/n}.
\]
We know that $|\Grid^\gridded_{\lfloor nX\rfloor}(\M)|\le |\Grid^\gridded_n(\M)|$ for all $n$, so this computation shows that $f(X)$ is a lower bound on the lower growth rate of $\Grid(\M)$ for all matrices $X$ in the admissible domain. On the other hand, as noted in the preceding paragraph, the upper growth rate of $\Grid(\M)$ is achieved by a value of $f$. These considerations establish the following result, which reduces the problem of computing the growth rate of $\Grid(\M)$ to that of maximizing $f$ on the admissible domain.

\begin{proposition}
\label{prop-gr-maximize-f}
If every cell of $\M$ has a proper growth rate then $\Grid(\M)$ has a proper growth rate, which is equal to the maximum value of the function $f$ on the admissible domain.
\end{proposition}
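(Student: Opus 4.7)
The plan is to assemble the ingredients already established in the preceding discussion into a squeeze argument. The two inequalities needed are (i) the upper growth rate of $\Grid(\M)$ is at most $\max_{\addomain} f$, and (ii) the lower growth rate of $\Grid(\M)$ is at least $\max_{\addomain} f$. Together with the trivial relation that the lower growth rate is at most the upper growth rate, these force all three quantities to coincide, which both establishes the existence of the proper growth rate and identifies it with $\max_{\addomain} f$.

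For (i), I would start from the sequence $\{A^*_n\}$ supplied by the previous proposition and pass to a subsequence along which the scaled matrices $A^*_n/n$ converge to some $\hat A^\ast\in\addomain$, which is possible because $\addomain$ is compact. The computation sketched in the text, where Stirling's Formula is applied to the multinomial coefficients in Proposition~\ref{prop-count-gridded-A} and then $n^{\mbox{\scriptsize th}}$ roots are taken, shows that along this subsequence
\[
	\lvert\Grid^\gridded_{A^*_n}(\M)\rvert^{1/n}\longrightarrow f(\hat A^\ast).
\]
Hence the upper growth rate, being the limit superior of the left-hand side, equals $f(\hat A^\ast)\le \max_{\addomain} f$.

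For (ii), I would fix an arbitrary $X\in\addomain$ and apply the same Stirling computation to the matrices $\lfloor nX\rfloor$, which are admissible of weight at most $n$. This yields
\[
	f(X)=\lim_{n\to\infty}\lvert\Grid^\gridded_{\lfloor nX\rfloor}(\M)\rvert^{1/n}.
\]
Since $\lvert\Grid^\gridded_{\lfloor nX\rfloor}(\M)\rvert\le\lvert\Grid^\gridded_n(\M)\rvert$, this shows $f(X)\le\liminf_{n\to\infty}\lvert\Grid^\gridded_n(\M)\rvert^{1/n}$, i.e., $f(X)$ lower-bounds the lower growth rate of $\Grid^\gridded(\M)$ (equivalently, of $\Grid(\M)$). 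Taking the supremum over $X\in\addomain$, which is attained because $f$ is continuous on the compact set $\addomain$ and the convention $0^0=1$ makes $f$ continuous at boundary points where entries vanish, yields the claimed lower bound.

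The only step requiring care is verifying the Stirling-based asymptotic uniformly enough to justify both limits above, particularly along entries of $\hat A^\ast$ (or of $X$) that may be zero; the factor-by-factor lower bound on $f$ noted in the text, that every factor in the product defining $f$ is at least $1$, together with the convention $0^0=1$, handles this case cleanly. Once the two inequalities are in hand, the squeeze is immediate and the proposition follows.
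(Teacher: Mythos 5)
Your proof is correct and takes essentially the same approach as the paper: the paper's argument, given in the paragraphs preceding the proposition, is precisely the squeeze you describe, showing that the upper growth rate equals $f(\hat A^\ast)$ via Stirling along a convergent subsequence of $\{A^*_n/n\}$ while every $f(X)$ with $X\in\addomain$ lower-bounds the lower growth rate. Your added remark that $f$ is continuous on the compact domain $\addomain$ (so that the supremum is attained) is a detail the paper leaves implicit but does no more than make explicit what the paper already relies on.
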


\section{Singular Values and the Lower Bound}

Proposition~\ref{prop-gr-maximize-f} gives us a straightforward way to establish lower bounds on the growth rate of $\Grid(\M)$: we must simply compute the function $f$ for an admissible matrix $X$. We do so via the singular values of $\Gamma$. We refer the reader to any comprehensive linear algebra book for the full theory of singular value decompositions of matrices and simply state here the facts we require, specialized to our context.

The nonnegative real number $s$ is a \emph{singular value} of the $t\times u$ matrix $\Gamma$ if there exist unit column vectors $\vec{r}\in\mathbb{R}^u$ and $\vec{c}\in\mathbb{R}^t$, called \emph{left} and \emph{right singular vectors}, respectively, such that
\[
	\Gamma^T\vec{r}=s\vec{c}
	\quad\text{and}\quad
	\Gamma\vec{c}=s\vec{r}.
\]
Let $s$ be a singular value of $\Gamma$. From the above it follows that every left singular vector $\vec{r}$ for $s$ is an eigenvector of $\Gamma\Gamma^T$ for the eigenvalue $s^2$, while every right singular vector $\vec{c}$ for $s$ is an eigenvalue of $\Gamma^T\Gamma$, also for the eigenvalue $s^2$. Because $\Gamma$ is nonnegative, the Perron--Frobenius Theorem implies that there are nonnegative left and right singular vectors for the greatest singular value of $\Gamma$.

\begin{proposition}
\label{prop-sing-value-to-lagrange}
Let $\vec{r}$ and $\vec{c}$ be nonnegative left and right singular vectors for the greatest singular value $s$ of $\Gamma$ and define the matrix $X$ by $X_{k,\ell}=\Gamma_{k,\ell}\vec{r}_k\vec{c}_\ell$. Then $X$ lies in the admissible domain and $f(X)=s^2$, the greatest eigenvalue of $\Gamma^T\Gamma$.
\end{proposition}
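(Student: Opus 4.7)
The plan is to verify the proposition by direct computation, using the compact form
\[
	f(X)=\prod_{k,\ell}\left(\frac{\Gamma_{k,\ell}^{2}\bigl(\sum X_{k,\dummyvar}\bigr)\bigl(\sum X_{\dummyvar,\ell}\bigr)}{X_{k,\ell}^{2}}\right)^{X_{k,\ell}}
\]
derived in the previous section. Admissibility of the support is immediate: whenever $(k,\ell)\notin\supp(\Gamma)$, we have $\Gamma_{k,\ell}=0$ and hence $X_{k,\ell}=0$, so $\supp(X)\subseteq\supp(\Gamma)$. To check that $X$ has unit weight, I would compute
\[
	\sum_{k,\ell}\Gamma_{k,\ell}\vec{r}_{k}\vec{c}_{\ell}=\vec{r}^{\,T}(\Gamma\vec{c})=s\|\vec{r}\|^{2}=s
\]
using the singular-value relation $\Gamma\vec{c}=s\vec{r}$ together with the unit-norm assumption on $\vec{r}$; rescaling the singular vectors (or the matrix $X$) by a factor of $1/\sqrt{s}$ then places $X$ in $\addomain$.

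The singular-value equations feed directly into short formulas for the row and column sums of $X$. Summing out $\ell$ in the expression $\Gamma_{k,\ell}\vec{r}_{k}\vec{c}_{\ell}$ pulls out a factor of $\vec{r}_{k}$ and leaves $(\Gamma\vec{c})_{k}=s\vec{r}_{k}$, so $\sum X_{k,\dummyvar}=s\vec{r}_{k}^{2}$; symmetrically $\sum X_{\dummyvar,\ell}=s\vec{c}_{\ell}^{2}$. After the unit-weight normalization these become $\vec{r}_{k}^{2}$ and $\vec{c}_{\ell}^{2}$, so on the support of $\Gamma$ each factor inside the product for $f(X)$ simplifies as
\[
	\left(\frac{\Gamma_{k,\ell}^{2}\vec{r}_{k}^{2}\vec{c}_{\ell}^{2}}{X_{k,\ell}^{2}}\right)^{X_{k,\ell}}=(s^{2})^{X_{k,\ell}},
\]
while cells off the support contribute the factor $1$ via the $0^{0}=1$ convention. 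Because the exponents $X_{k,\ell}$ themselves sum to $1$, the entire product collapses to $(s^{2})^{1}=s^{2}$, and the linear-algebraic discussion preceding the statement identifies $s^{2}$ as the greatest eigenvalue of $\Gamma^{T}\Gamma$.

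The only real obstacle I foresee is bookkeeping around normalization: one may either rescale the singular vectors so that $\vec{r}_{k}\vec{c}_{\ell}$ already carries the correct overall scale, or divide $X$ by $s$ at the very end, but in either case one must keep track of where the scalar goes so that the quantity in the exponent matches the definition of $f$. Once this is sorted, the computation itself is essentially a one-line simplification, and no additional ideas are required beyond the singular-value identities already recalled in the statement.
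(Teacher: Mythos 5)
Your proof is correct and follows essentially the same approach as the paper: use the singular-value relations to compute the row and column sums of $X$, substitute into the product form of $f$, and observe that each factor's base collapses to $s^2$, so that the whole product is $(s^2)^{\text{weight}(X)}$. In fact you are more careful than the paper on one point. As you observe, the matrix $X$ as literally defined has weight $\sum_{k,\ell}\Gamma_{k,\ell}\vec{r}_k\vec{c}_\ell = \vec{r}^T\Gamma\vec{c} = s\|\vec{r}\|^2 = s$, which is not $1$ in general, so $X$ does not lie in $\addomain$ unless $s=1$. The paper's proof computes the column and row sums (namely $s\vec{r}_k^2$ and $s\vec{c}_\ell^2$) and shows the ratio inside the product equals $s^2$ --- a quantity that is scale-invariant --- but it does not verify the unit-weight requirement that the statement implicitly needs. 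Your fix (dividing both singular vectors by $\sqrt{s}$, so $X$ is divided by $s$) is exactly right and leaves the per-cell ratio untouched while bringing the total exponent to $1$. One small slip in phrasing: rescaling the matrix $X$ itself by $1/\sqrt{s}$ (the parenthetical alternative you offer) would give weight $\sqrt{s}$, still not $1$; you want to rescale $X$ by $1/s$, which is what rescaling each singular vector by $1/\sqrt{s}$ accomplishes.
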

\begin{proof}
Suppose $\Gamma$ is a $t\times u$ matrix and let $\vec{r}$, $\vec{c}$, $s$, and $X$ be as in the statement of the proposition. It follows that the sum over the $k^{\mbox{\scriptsize th}}$ column of $X$ is
\[
	\sum X_{k,\dummyvar}
	=
	\vec{r}_k\sum_{j=1}^u \Gamma_{k,j}\vec{c}_j.
\]
The sum on the right hand side is the inner product of the $k^{\mbox{\scriptsize th}}$ column of $\Gamma$ with $\vec{c}$, and thus because $\vec{r}$ and $\vec{c}$ are singular vectors of $\Gamma$ corresponding to $s$, the quantity above is equal to $s\vec{r}_k^2$. Analogously, the row sums simplify as $\sum A_{\dummyvar,\ell}=s\vec{c}_\ell^2$.

This establishes that for $(k,\ell)\in\supp(X)$ we have
\[
	\frac{\Gamma_{k,\ell}^2\left(\sum X_{k,\dummyvar}\right)\left(\sum X_{\dummyvar,\ell}\right)}{X_{k,\ell}^2}
	=
	\left(\frac{s \Gamma_{k,\ell} \vec{r}_k\vec{c}_\ell}{\Gamma_{k,\ell}\vec{r}_k\vec{c}_\ell}\right)^2
	=
	s^2,
\]
as desired.
\end{proof}

The \emph{Hadamard product} of the matrices $A$ and $B$ (of the same size) is defined by $(A\circ B)_{k,\ell}=A_{k,\ell}B_{k,\ell}$ and the \emph{tensor product} of the vectors $\vec{r}$ and $\vec{c}$ is defined by $(\vec{r}\otimes\vec{c})_{k,\ell}=\vec{r}_k\vec{c}_{\ell}$. Note that with this notation, the matrix $X$ in Proposition~\ref{prop-sing-value-to-lagrange}, which according to the results of this section and the next gives a blueprint for constructing typical members of $\Grid(\M)$, can be expressed as
\[
	\Gamma\circ(\vec{r}\otimes\vec{c}).
\]

\section{Lagrange Multipliers and the Upper Bound}

Having established the lower bound in Theorem~\ref{thm-bevan-grid-gr}, we seek to find a matching upper bound. This task is equivalent (by Proposition~\ref{prop-gr-maximize-f}) to maximizing $f$, or equivalently, the function $\log f$, on the admissible domain. By slight abuse of notation, for this purpose we view the admissible domain as
\[
	\addomain=\left\{X\in[0,1]^{\supp(\Gamma)} \st \sum_{(k,\ell)\in\supp(\Gamma)} X_{k,\ell}=1\right\}.
\]
As this is a constrained optimization problem, it is natural to use Lagrange multipliers, and indeed this is the approach we take in proving Proposition~\ref{prop-lagrange-to-sing-value}, below, which completes our proof of Theorem~\ref{thm-bevan-grid-gr}.

In preparation for this approach, we compute that, viewing the contents of the cells of a matrix $X\in\addomain$ as formal variables and for a cell $(k,\ell)\in\supp(\Gamma)$,
\begin{equation}
\label{eqn-partial-derivs}\tag{$\dagger$}
	\frac{\partial \log f}{\partial X_{k,\ell}}
	=
	2 \log \Gamma_{k,\ell}
	+
	\underbrace{\left( \log \left(\sum X_{k,\dummyvar}\right) - \log X_{k,\ell} \right)}_{\text{column contribution}}
	+
	\underbrace{\left( \log \left(\sum X_{\dummyvar,\ell}\right) - \log X_{k,\ell} \right)}_{\text{row contribution}}.
\end{equation}
Because the nonempty entries of $\M$ are infinite classes, we have $\Gamma_{k,\ell}\ge 1$ for all $(k,\ell)\in \supp(\Gamma)$. Moreover, each of the column (resp., row) contributions in \eqref{eqn-partial-derivs} is nonnegative, because $\log (\sum X_{k,\dummyvar})$ dominates $\log X_{k,\ell}$ (resp., because $\log (\sum X_{\dummyvar,\ell})$ dominates $\log X_{k,\ell}$). Therefore $\partial\log f/\partial X_{k,\ell}\ge 0$ for all $(k,\ell)\in \supp(\Gamma)$. This should make intuitive sense because increasing $X_{k,\ell}$ corresponds to adding entries to the $(k,\ell)$ cell of the corresponding set of permutations without removing entries from any other cell.

\begin{proposition}
\label{prop-lagrange-to-sing-value}
The maximum value of the function $f$ on the admissible domain $\addomain$ is at most the greatest eigenvalue of $\Gamma^T\Gamma$.
\end{proposition}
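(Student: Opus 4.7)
The plan is to apply Lagrange multipliers on the support of a maximizer of $f$ and then recognize the resulting stationarity condition as a singular-value equation for a related matrix. Since $x^x\to 1$ and $x^{-x}\to 1$ as $x\to 0^+$, the function $f$ extends continuously to the compact set $\addomain$ (with the convention $0^0=1$), so the maximum is attained at some $X^\ast\in\addomain$. Let $S=\supp(X^\ast)\subseteq\supp(\Gamma)$ and consider the face $\addomain_S=\{X\in\addomain:\supp(X)\subseteq S\}$; the maximizer $X^\ast$ lies in the relative interior of $\addomain_S$, on which $\log f$ is smooth. Lagrange multipliers for the single constraint $\sum X_{k,\ell}=1$ then yield a constant $\lambda$ with $\partial\log f/\partial X_{k,\ell}=\lambda$ for every $(k,\ell)\in S$. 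Exponentiating \eqref{eqn-partial-derivs} gives
\[
	\frac{\Gamma_{k,\ell}^2\left(\sum X^\ast_{k,\dummyvar}\right)\left(\sum X^\ast_{\dummyvar,\ell}\right)}{(X^\ast_{k,\ell})^2} = e^\lambda
	\quad\text{for every } (k,\ell)\in S,
\]
so every nontrivial factor of $f(X^\ast)$ equals $e^\lambda$, and the unit-weight condition delivers $f(X^\ast)=e^\lambda$.

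Next, set $s=e^{\lambda/2}$, $a_k=\sqrt{\sum X^\ast_{k,\dummyvar}}$, and $b_\ell=\sqrt{\sum X^\ast_{\dummyvar,\ell}}$, and let $\Gamma^{(S)}$ denote the matrix that agrees with $\Gamma$ on $S$ and vanishes elsewhere. Rearranging the stationarity equation gives $X^\ast_{k,\ell}=s^{-1}\Gamma^{(S)}_{k,\ell}a_k b_\ell$, valid for every $(k,\ell)$. Summing this identity over $\ell$ and dividing by $a_k$ (when $a_k=0$ the $k$th column of $X^\ast$, and hence of $\Gamma^{(S)}$, vanishes, so the relation is trivial) yields $sa_k=\sum_\ell \Gamma^{(S)}_{k,\ell}b_\ell$, and the symmetric calculation gives $sb_\ell=\sum_k\Gamma^{(S)}_{k,\ell}a_k$. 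In the paper's indexing conventions these are precisely the equations $(\Gamma^{(S)})^T b=sa$ and $\Gamma^{(S)} a=sb$, exhibiting $s$ as a singular value of $\Gamma^{(S)}$ with nonnegative singular vectors $a,b$. In particular $s^2$ is an eigenvalue of $(\Gamma^{(S)})^T\Gamma^{(S)}$, so $s\le\sigma_{\max}(\Gamma^{(S)})$.

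Finally, entrywise $0\le\Gamma^{(S)}\le\Gamma$ and nonnegativity give $\|\Gamma^{(S)} x\|_2\le\|\Gamma|x|\|_2\le\sigma_{\max}(\Gamma)\|x\|_2$ for every $x$ (where $|x|$ denotes the componentwise absolute value), whence $\sigma_{\max}(\Gamma^{(S)})\le\sigma_{\max}(\Gamma)$. Combining, $f(X^\ast)=s^2\le\sigma_{\max}(\Gamma)^2$, which is exactly the greatest eigenvalue of $\Gamma^T\Gamma$. The main obstacle is that a maximizer of $f$ may lie on the boundary of $\addomain$, with some coordinates indexed by $\supp(\Gamma)$ equal to zero, which prevents the direct use of Lagrange multipliers in all the variables simultaneously. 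The remedy is to carry out the Lagrange-multiplier argument on the face $\addomain_S$ supported on $S=\supp(X^\ast)$, where $X^\ast$ is interior, and then invoke the monotonicity of the operator norm on nonnegative matrices to transfer the resulting singular-value bound from $\Gamma^{(S)}$ back to $\Gamma$.
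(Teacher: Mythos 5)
Your proof is correct and follows the same general strategy as the paper: restrict to the face $\addomain_S$ supported on $S=\supp(X^\ast)$, apply Lagrange multipliers in the interior of that face, and read off singular-vector equations from the stationarity condition. The interesting difference is how you handle the possibility that $S$ is a proper subset of $\supp(\Gamma)$. The paper works directly with $\Gamma$: after solving the stationarity equation for $\hat{A}_{k,\ell}$, it asserts $\vec{c}_j = s\hat{A}_{k,j}/(\Gamma_{k,j}\vec{r}_k)$ \emph{whenever} $\Gamma_{k,j}\neq 0$ and then sums over $j$ to conclude $\Gamma\vec{c}=s\vec{r}$. That identity comes from an equation valid only on $S$; for indices $(k,j)\in\supp(\Gamma)\setminus S$ it requires the extra observation that at a maximizer any such cell must lie in an entirely zero row or column of $\hat{A}$ (otherwise the divergence of $\partial\log f/\partial X_{k,j}$ as $X_{k,j}\to 0^+$ lets one increase $f$ by shifting mass into that cell). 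The paper leaves this point implicit. You avoid the need for it altogether by replacing $\Gamma$ with the masked matrix $\Gamma^{(S)}$, for which the singular-vector equations are unambiguous, and then transferring the bound via the elementary monotonicity $\sigma_{\max}(\Gamma^{(S)})\le\sigma_{\max}(\Gamma)$ for nonnegative matrices. This buys you a cleaner argument with no hidden case analysis and, as you note, it also subsumes the paper's separately treated degenerate case $|S|=1$ (where the Lagrange condition on a zero-dimensional face holds vacuously and your calculations reduce to $s=\Gamma_{k_0,\ell_0}$). Both proofs are valid; yours is the more transparent version of the argument the paper is implicitly making.
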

\begin{proof}
Suppose that $\hat{A}$ maximizes $f$ on the admissible domain and set $S=\supp(\hat{A})$. There is a degenerate case that must first be ruled out. If $|S|=1$, meaning that $\hat{A}$ has a single nonzero entry (which is therefore equal to $1$), then it can be seen from either the combinatorial or analytic definition of $f$ that $f(\hat{A})$ is equal to $\Gamma_{k,\ell}^2$ for some cell $(k,\ell)$, which is a lower bound on the largest eigenvalue of $\Gamma^T\Gamma$.

With this case dispatched, we may assume that the nonzero entries of $\hat{A}$ are all strictly between $0$ and $1$. The usefulness of this fact is that it shows that $\hat{A}$ lies in the interior of the subset $\{X\in\addomain\st \supp(X)\subseteq S\}$, which we view as
\[
	\addomain_S
	=
	\left\{ X\in[0,1]^S \st \sum_{(k,\ell)\in S} X_{k,\ell}=1\right \}.
\]
For the purposes of this proof we also restrict the sums $\sum \hat{A}_{k,\dummyvar}$ and $\sum \hat{A}_{\dummyvar,\ell}$ to range over all $\ell$ (resp., $k$) such that $(k,\ell)\in S$.

As $\hat{A}$ maximizes $f$ over $\addomain$, it certainly must maximize $f$ over $\addomain_S$, and since $\hat{A}$ lies in the interior of $\addomain_S$, we know that the Lagrange conditions on $\log f$ must be satisfied at $\hat{A}$. Because the constraint on $\addomain_S$ is simply that the entries in $S$ sum to $1$, these conditions are that the quantities $\partial \log f/\partial X_{k,\ell}$ evaluated at $\hat{A}$ are equal for all $(k,\ell)\in S$. By exponentiating both sides and then taking their square roots, this is equivalent to the statement that there is a value $s>0$ such that for all $(k,\ell)\in S$,
\begin{equation}
\label{eqn-weights-goal}\tag{$\ddagger$}
	\frac{\Gamma_{k,\ell}\, \sqrt{\sum \hat{A}_{k,\dummyvar}}\, \sqrt{\sum \hat{A}_{\dummyvar,\ell}}}{\hat{A}_{k,\ell}}
	=
	s.
\end{equation}
Indeed, if \eqref{eqn-weights-goal} holds, then we see that $f(\hat{A})=s^2$. Therefore our last step is to show that $s^2$ is at most the greatest eigenvalue of $\Gamma^T\Gamma$, or equivalently, that $s$ is at most the greatest singular value of $\Gamma$. To this end, define the vectors
\[
	\vec{r}_k=\sqrt{\sum \hat{A}_{k,\dummyvar}}
	\quad\text{and}\quad
	\vec{c}_\ell=\sqrt{\sum \hat{A}_{\dummyvar,\ell}}.
\]
Both $\vec{r}$ and $\vec{c}$ are unit vectors because their norms are equal to the square root of the weight of $\hat{A}$. Furthermore, solving \eqref{eqn-weights-goal} for $\hat{A}_{k,\ell}$ shows that $\hat{A}_{k,\ell}=\Gamma_{k,\ell}\vec{r}_k\vec{c}_\ell/s$. Thus when $\Gamma_{k,j}\neq 0$ we have
\[
	\vec{c}_j = \frac{s \hat{A}_{k,j}}{\Gamma_{k,j}\vec{r}_k}.
\]
This allows us to check that $\Gamma\vec{c}=s\vec{r}$ and $\Gamma^T\vec{r}=s\vec{c}$. For instance,
\[
	(\Gamma\vec{c})_k
	=
	\sum_{j=1}^u \Gamma_{k,j}\vec{c}_j
	=
	\frac{s}{\vec{r}_k}\sum \hat{A}_{k,\dummyvar},
\]
and as the row sum $\sum \hat{A}_{k,\dummyvar}$ is equal to $\vec{r}_k^2$ we have that $(\Gamma\vec{c})_k=s\vec{r}_k$. A similar computation shows that $\Gamma^T\vec{r}=s\vec{c}$.

This establishes that $s$ is a singular value of $\Gamma$ and hence $s^2$ is an eigenvalue of $\Gamma^T\Gamma$. Since $f(\hat{A})=s^2$, this completes the proof of the theorem.
\end{proof}

%
%
%
%
%
%

%

\bibliographystyle{acm}
\bibliography{../../refs}

\end{document}